\def\hpq0{h^{p,q}_{\leq 0}}
\def\Hpq0{\H_{\leq 0}^{p,q}}
\def\R{{\mathbb R}}
\def\C{{\mathbb C}}
\def\H{{\mathcal H}}
\def\Re{{\rm Re\,  }}
\def\be{\begin{equation}}
\def\ee{\end{equation}}
\newtheorem{thm}{Theorem}[section]
\newtheorem{lma}[thm]{Lemma}
\theoremstyle{definition}
\theoremstyle{remark}
\newtheorem{preremark}{Remark}
\newtheorem{preex}{Example}
\numberwithin{equation}{section}
\title[]
{Complex  integrals  and  Kuperberg's proof of the Bourgain-Milman theorem.}
\address{Department of Mathematics\\Chalmers University
  of Technology \\
 S-412 96
  G\"OTEBORG\\SWEDEN} 
\email{ bob@chalmers.se}
\author[]{Bo Berndtsson}
\begin{document}
\begin{abstract} We give a proof of the Bourgain-Milman theorem using complex methods. The proof is inspired by Kuperberg's, but  considerably shorter.
\end{abstract}

\maketitle

\section{introduction}

If $K$ is a convex body in $\R^n$, its polar body is
$$
K^\circ=\{ \xi\in \R^n; x\cdot \xi\leq 1,    \, \text{ if} \,\, x\in K\}.
$$
The Mahler volume of $K$ is
$$
M(K)=|K||K^\circ|,
$$
where $|K|$ is the Lebesgue volume of $K$.

The Bourgain-Milman theorem (\cite{Bourgain-Milman}) says that there is a universal constant, $C$, not depending on the dimension, such that
$$
M(K)\geq C^n/n!.
$$
Mahler's conjecture says that if $K$ is symmetric ($-K=K$),  $C$ can be taken equal to 4, which is what you get when $K$ is a cube. 

There are by now several proofs of the Bourgain-Milman inequality, including \cite{Nazarov} by Nazarov, \cite{Kuperberg}, \cite{Kuperberg2}
by Kuperberg and  \cite{Giannopoulos}, by Giannopoulos, Paouris and Vritsiou. All of these proofs are very  interesting; Kuperberg's gives the so far best known constant, $C=\pi$ in the symmetric case.
In this note we will give a variant of Kuperberg's proof which replaces his use of 'Gauss linking integrals' by a complex analytic argument from \cite{B}. The main novelty here is the combination of the methods of \cite{B} with Kuperberg's ideas, but no knowledge of the results of \cite{B} is necesseray to read this paper. In a companion paper (\cite{Berndtsson}) we will also give some remarks and variations of Nazarov's proof. 

In the next section we give the proof of Kuperberg's estimate, or actually a function version of his estimate, involving Legendre transforms of convex functions instead of polars of convex bodies (Theorem 2.1). The relation between the function version and estimates for the Mahler volume of convex bodies is discussed in Remark 3, section 2. The proof looks formally quite different from Kuperberg's so in a last section we try to explain the points relating the two proofs.

Finally I would like to thank Bo'az Klartag and Yanir Rubinstein for very stimulating discussions on these matters and two anonymous referees for useful comments and corrections.  

\section{Kuperberg's version of the Bourgain-Milman theorem}
Let $\phi(x)$ be a strictly convex and smooth function on $\R^n$. Its Legendre transform is 
$$
\phi^*(\xi)=\sup_x \xi\cdot x -\phi(x).
$$
For later use we remark  that the supremum is attained when the gradient of the RHS vanishes, i.e. when
$$
\xi=\partial\phi/\partial x.
$$
Hence
\be
\phi^*(\xi)+\phi(x)= \xi\cdot x,
\ee
when $\xi=\partial\phi/\partial x$.  Our main result is as follows:
\begin{thm} If $\phi$ is an even convex function on $\R^n$, then 
$$
\int_{\R^n} e^{-\phi}\int_{\R^n}e^{-\phi^*}\geq \pi^n.
$$
\end{thm}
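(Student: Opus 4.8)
The plan is to recognise the product of the two real integrals as a single integral over $\Cn$ of $e^{-W}$ for a plurisubharmonic weight $W$ that encodes the Legendre duality, and then to extract the constant $\pi^n$ from a sharp complex-analytic estimate — this last point being where the argument of \cite{B} does the real work (replacing Kuperberg's Gauss linking integrals).

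\emph{Reductions.} Adding a constant to $\phi$ replaces $\phi^*$ by $\phi^*$ minus the same constant and leaves the product of the integrals unchanged, so I normalise $\phi(0)=0$, whence $\phi,\phi^*\ge 0$. By approximating $\phi$ from below (regularising and adding $\varepsilon\abs{x}^2$) it is enough to treat $\phi$ smooth, strictly convex and superlinear, so that $\phi^*$ is finite, smooth and strictly convex on all of $\Rn$, the gradient map $x\mapsto\nabla\phi(x)$ is a diffeomorphism of $\Rn$, and all integrals below converge; the general case then follows by monotone/dominated convergence.

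\emph{The complex reformulation.} Write a point of $\Cn$ as $z=x+i\xi$, $x=\Re z$, $\xi=\Im z$, and set $W(z)=\phi(\Re z)+\phi^*(\Im z)$. By Fubini,
$$\int_{\Rn}\ephi\int_{\Rn}e^{-\phi^*}=\int_{\Cn}e^{-W}\,d\lambda ,$$
with $d\lambda$ Lebesgue measure on $\Cn$. Now $\phi(\Re z)$ is plurisubharmonic (convex in $\Re z$, independent of $\Im z$), and likewise $\phi^*(\Im z)$, so $W$ is plurisubharmonic. Fenchel's inequality $\phi(x)+\phi^*(\xi)\ge x\cdot\xi$ lets me split $W=h+R$, where $h(z)=(\Re z)\cdot(\Im z)=\tfrac12\Im\hake{z,z}$ is pluriharmonic and $R:=W-h\ge 0$ is plurisubharmonic, vanishing exactly on the totally real $n$-manifold $L=\setof{\Im z=\nabla\phi(\Re z)}$ — the graph of a Lagrangian section. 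Since $\abs{e^{i\hake{z,z}/2}}=e^{-h}$, the theorem becomes the purely complex-analytic statement
$$\int_{\Cn}e^{-R(z)}\,\abs{e^{i\hake{z,z}/2}}\,d\lambda(z)\ \ge\ \pi^n$$
for $R$ of this special form.

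\emph{The core estimate.} This inequality is, in essence, what I would import from \cite{B}. The point is that $R$ is a nonnegative plurisubharmonic function vanishing to second order along the Lagrangian $L$, and the self-duality $\phi\leftrightarrow\phi^*$ forces the second-order model of the pair $(R,h)$ transverse to $L$ to be precisely the Fock model $e^{-\abs{z}^2}$, for which the integral is exactly $\pi^n$. Concretely, I would twist by the holomorphic function $f(z)=e^{it\hake{z,z}/2}$ (so $f(0)=1$ and $\abs{f}^2=e^{-t\Im\hake{z,z}}$) for a suitable $t\in[0,\tfrac12)$, rewrite the integral as $\int_{\Cn}\abs{f}^2e^{-\Psi}\,d\lambda$ with $\Psi=R+(1-2t)h\ge 0$ plurisubharmonic, and compare $e^{-\Psi}$ with $e^{-\abs{z}^2}$ along the complex lines transverse to $L$, where the Legendre relation pins down the comparison. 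I expect this to be the main obstacle: getting the \emph{sharp} constant $\pi^n$ rather than merely some dimensional $c^n$ is exactly what needs the global self-duality and the precise subharmonicity/Bergman-kernel bookkeeping of \cite{B}; a naive Fubini over transverse complex lines will not do, since along such a line $R$ can grow faster than quadratically whenever $\phi$ does, so the $n$ complex directions have to be treated together. Granting this, the chain of (in)equalities gives $\int_{\Rn}\ephi\int_{\Rn}e^{-\phi^*}=\int_{\Cn}e^{-W}\ge\pi^n$.
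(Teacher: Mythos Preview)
Your proposal has a genuine gap at exactly the point you yourself flag. Everything up to ``The core estimate'' is correct and routine: writing the product as $\int_{\Cn}e^{-W}$ with $W(z)=\phi(\Re z)+\phi^*(\Im z)$ plurisubharmonic, and splitting $W=h+R$ with $h$ pluriharmonic and $R\ge 0$ by Fenchel. But from there you only \emph{describe} a hoped-for mechanism (``twist by $e^{it\hake{z,z}/2}$ \ldots compare with the Fock model \ldots the Legendre relation pins down the comparison'') and then write ``Granting this''. Nothing concrete is proved, and in fact the naive inequalities go the wrong way: $R\ge 0$ gives $e^{-R}\le 1$, which is useless for a \emph{lower} bound on $\int e^{-R}e^{-h}$, while $\int e^{-h}$ itself diverges. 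You also never use that $\phi$ is even; since your whole setup is symmetric in $\phi$ and $\phi^*$ and makes no reference to parity, any argument along these lines that produced the sharp $\pi^n$ would do so for all convex $\phi$ with $\phi(0)=0$, which should make you suspicious. Finally, what you call ``importing from \cite{B}'' is not a black box one can cite: the relevant content of \cite{B} is a contour-deformation identity, not an off-the-shelf lower bound for $\int_{\Cn}|f|^2 e^{-\Psi}$.

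The paper's proof is structurally different from what you sketch and uses two ideas you are missing. First, it \emph{doubles} the variables: one works on $\Lambda=\{(x,y,\xi,\eta):\xi=\nabla\phi(x),\ \eta=\nabla\phi(y)\}\subset\R^{4n}$, complexified as $z=x+iy$, $\zeta=\xi+i\eta$, and pushes $\Lambda$ forward by $\pi(x,y,\xi,\eta)=\bigl((x+y)/2,(\xi-\eta)/2\bigr)$. Convexity of $\phi$ and \emph{evenness} of $\phi^*$ then give the pointwise bound $\phi(t)+\phi^*(s)\le \tfrac12(x\cdot\xi+y\cdot\eta)$ on $\Lambda$, which is where the hypothesis enters. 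Second, the resulting lower bound for the Mahler integral is $2^{-n}\bigl|\int_\Lambda e^{-\frac12 z\cdot\bar\zeta}\,\Omega\bigr|$ with $\Omega$ a holomorphic $(n,0)\times(0,n)$ form in $(z,\bar\zeta)$; because $e^{-\frac12 z\cdot\bar\zeta}\Omega$ is closed, a Cauchy/Stokes contour-deformation from $\Lambda_\phi$ to $\Lambda_{|x|^2/2}$ (the diagonal $\zeta=z$) evaluates the integral exactly as $(2\pi)^n$, giving $\pi^n$. So the ``complex input'' is not a Bergman-kernel or psh comparison inequality but an exact identity obtained by moving a contour; your single-copy $\Cn$ picture has no analogue of this, which is why your argument stalls.
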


For the proof, we assume first that $\phi$ is smooth and strictly convex. That this is no serious restriction follows from Lemma 2.4 below.  Let 
$$
\Lambda= \{(x,y,\xi, \eta); \xi=\partial\phi(x)/\partial x, \eta=\partial\phi(y)/\partial y\} \subset \R^n_x\times\R^n_y\times\R^n_\xi\times\R^n_\eta.
$$
We now define a map $\pi$ from $\Lambda$ to $\R^n_t\times\R^n_s$ by
$$
t=\frac{x+y}{2}, s=\frac{\xi-\eta}{2}.
$$
\begin{lma} If $\phi$ is smooth and strictly convex the map $\pi$ is injective from $\Lambda$ to $\R^{2n}$. It is surjective if $\phi$ grows faster than any linear function at infinity, i. e. if for any $C$, $\phi(x)\geq C|x|$ for $|x|$ large enough. 
\end{lma}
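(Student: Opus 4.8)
The plan is to identify $\Lambda$ with the graph of the gradient of $\phi$ and then reduce the statement to two standard facts: strict monotonicity of $\nabla\phi$ (for injectivity) and solvability of the equation $\nabla h=\mathrm{const}$ for a coercive strictly convex function $h$ (for surjectivity).

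Since $\phi$ is smooth, $\Lambda$ is just the graph $\{(x,y,\nabla\phi(x),\nabla\phi(y)):(x,y)\in\R^n\times\R^n\}$, and under this parametrization $\pi$ becomes the map
\[
\Psi(x,y)=\Big(\tfrac{x+y}{2},\,\tfrac{\nabla\phi(x)-\nabla\phi(y)}{2}\Big)
\]
on $\R^{2n}$. To prove injectivity, suppose $\Psi(x,y)=\Psi(x',y')$. The equality of the first components gives $x-x'=-(y-y')$, and the equality of the second components gives $\nabla\phi(x)-\nabla\phi(x')=\nabla\phi(y)-\nabla\phi(y')$. Taking the inner product of the latter with $x-x'$ and using $x-x'=-(y-y')$ yields
\[
\big(\nabla\phi(x)-\nabla\phi(x')\big)\cdot(x-x')=-\big(\nabla\phi(y)-\nabla\phi(y')\big)\cdot(y-y').
\]
For a strictly convex smooth function the left side is $\ge 0$, with equality only when $x=x'$, while the right side is $\le 0$, with equality only when $y=y'$. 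Hence both sides vanish and $(x,y)=(x',y')$; note this argument does not use that $\phi$ is even.

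For surjectivity, fix $(t,s)\in\R^n\times\R^n$. Writing $y=2t-x$, finding a preimage amounts to solving $\nabla\phi(x)-\nabla\phi(2t-x)=2s$, and the left-hand side is precisely $\nabla_x h(x)$ where $h(x):=\phi(x)+\phi(2t-x)$. This $h$ is smooth and strictly convex. Moreover it is superlinearly coercive: the growth hypothesis together with continuity of $\phi$ forces $\phi$ to be bounded below, say $\phi\ge m$, so that $h(x)\ge\phi(x)+m\ge C|x|+m$ for $|x|$ large, for every $C$. Consequently $F(x):=h(x)-2s\cdot x$ is strictly convex and tends to $+\infty$ as $|x|\to\infty$, hence attains a minimum at some $x_0$, at which $\nabla h(x_0)=2s$. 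Setting $y_0=2t-x_0$, $\xi_0=\nabla\phi(x_0)$, $\eta_0=\nabla\phi(y_0)$ produces a point of $\Lambda$ with $\pi$-image $(t,s)$.

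I expect the only point requiring a little care to be the coercivity of $h$ in the surjectivity argument — one must check that the super-linear growth of $\phi$ passes to $h$, uniformly in the parameter $t$ — but this is immediate once one observes that a continuous function on $\R^n$ which grows faster than linearly at infinity is bounded below. Everything else is formal.
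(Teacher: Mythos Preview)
Your proof is correct and follows essentially the same route as the paper's: both reduce the problem, for each fixed $t$, to the auxiliary function $\Phi(x)=\phi(x)+\phi(2t-x)$ (your $h$), using its strict convexity for injectivity and its superlinear growth to find a minimizer of $\Phi(x)-x\cdot s$ for surjectivity. The only cosmetic difference is that for injectivity the paper simply invokes that $\nabla\Phi$ is injective because $\Phi$ is strictly convex, whereas you unpack this via the monotonicity inequality $(\nabla\phi(a)-\nabla\phi(b))\cdot(a-b)\ge 0$ applied separately to the $x$ and $y$ variables; these are equivalent arguments.
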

\begin{proof}
The  statements mean  that for any $t$, the map 
$$
(x,y)\to \partial\phi(x)/\partial x -\partial\phi(y)/\partial y=: p_t(x,y)
$$
 from $\{x+y=2t\}$ to $\R^n$ is injective and surjective respectively. Let $\Phi(x):=\phi(x)+\phi(2t-x)$.
Then $p_t(x,y)=\partial\Phi(x)/\partial x$. Since $\Phi$ is smooth and strictly convex, this map is injective. For the second part we have that $\Phi$ grows faster than any linear function if $\phi$ does. Hence $\Phi(x) -x\cdot s$ has a minimum for any $s$ in $\R^n$, and $\partial\Phi(x)/\partial x=s$ there.  Hence, $p_t$ is surjective then.
\end{proof}

We next pull back the Mahler integral 
$$
\int_{\R^n_t\times\R^n_s} e^{-(\phi(t)+\phi^*(s))} dtds
$$
to $\Lambda$ by $\pi$.
To compute the pull-back we introduce complex notation, $z=x+iy$, $\zeta=\xi+i\eta$, and  the differential forms
$$
\omega=\frac{i}{2}\sum dz_j\wedge d\bar\zeta_j,  \,\, \Omega=\omega^n/n!= a_n (\frac{i}{2})^n dz\wedge d\bar\zeta,
$$
where $dz:=dz_1\wedge ...\wedge dz_n$, and $a_n=(-1)^{n(n-1)/2}$. Both forms $\omega$ and $\Omega$ are closed and, moreover, $\Omega$ remains closed after multiplication by any holomorphic function of $z, \bar\zeta$. This will play an important role in the sequel.

In the next lemma we identify the Lebesgue volume form $dtds$ on $\R^n_t\times\R^n_s$ with the differential form $dt\wedge ds:=dt_1\wedge ...\wedge dt_n\wedge ds_1\wedge ...\wedge ds_n$. 
\begin{lma} 
$$
\pi^*(dtds)=(- 2)^{-n}a_n \Omega.
$$
\end{lma}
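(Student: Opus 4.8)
The plan is to parametrize $\Lambda$ by $(x,y)\in\R^n_x\times\R^n_y$ via $\xi=\partial\phi(x)/\partial x$ and $\eta=\partial\phi(y)/\partial y$, compute the pullback of $\Omega$ to these coordinates, compare it to the pullback of $dt\wedge ds$, and observe that both are Lebesgue-type forms differing by a constant Jacobian factor. First I would note that on $\Lambda$ we have $d\xi_j=\sum_k \phi_{jk}(x)\,dx_k$ and $d\eta_j=\sum_k \phi_{jk}(y)\,dy_k$, where $\phi_{jk}$ denotes the Hessian. Hence $dz_j=dx_j+i\,dy_j$ and $d\bar\zeta_j=d\xi_j-i\,d\eta_j=\sum_k\phi_{jk}(x)dx_k-i\sum_k\phi_{jk}(y)dy_k$, restricted to $\Lambda$.

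Next I would compute $\pi^*(dt\wedge ds)$ directly. Since $t=(x+y)/2$ and $s=(\xi-\eta)/2$, we get $dt_j=\tfrac12(dx_j+dy_j)$ and $ds_j=\tfrac12(d\xi_j-d\eta_j)=\tfrac12\sum_k(\phi_{jk}(x)dx_k-\phi_{jk}(y)dy_k)$ on $\Lambda$. So $\pi^*(dt\wedge ds)=2^{-2n}\,(dx+dy)\wedge(\xi'-\eta')$ where I write $\xi'=\sum\phi_{jk}(x)dx_k$ etc. Expanding $(dx+dy)\wedge(\xi'-\eta')$ as an $n$-fold wedge, the key linear-algebra identity is that for vector-valued one-forms this wedge product equals $\det(H(x))\,dx\wedge(-dy)+\det(H(y))\,dy\wedge dx+(\text{mixed terms})$; more precisely one should expand $\bigwedge_j(dx_j+dy_j)\wedge\bigwedge_j(\xi'_j-\eta'_j)$ multilinearly. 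The cleanest route is to substitute $u=x+y$, $v=x-y$ (or work with $t$ and $x-y$) so that the $(dx+dy)$ factor becomes $du$ and one is left with a single determinant in the remaining variables; alternatively, compare both $\pi^*(dt\wedge ds)$ and $\pi^*\Omega$ to the common form $dx\wedge dy$ (which makes sense because $\pi$ composed with the $(x,y)$-parametrization is a diffeomorphism onto its image by Lemma 2.2) and check the two constant ratios agree.

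For $\pi^*\Omega$: by definition $\Omega=a_n(i/2)^n\,dz\wedge d\bar\zeta$, and pulled back to the $(x,y)$-coordinates $dz=\bigwedge_j(dx_j+idy_j)$, $d\bar\zeta=\bigwedge_j(\xi'_j-i\eta'_j)$. I would expand these two $n$-forms, collect the result as a multiple of $dx\wedge dy$, and track the powers of $i$ and the combinatorial sign $a_n$. The main obstacle — and the only real content — is this determinant bookkeeping: showing that the same algebraic expansion governs both $\pi^*\Omega$ and $\pi^*(dt\wedge ds)$ so that the Hessian determinants and the cross terms cancel identically, leaving only the numerical factor $(-2)^{-n}a_n$. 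I expect the slickest way to handle it is to avoid expanding altogether: observe that $\frac i2 dz_j\wedge d\bar\zeta_j$ restricted to $\Lambda$, when summed and raised to the $n$-th power over $n!$, is a $2n$-form on a $2n$-manifold, hence a scalar times $dx\wedge dy$; and that same scalar shows up (up to the explicit constant from the factors of $1/2$ in $t,s$) when one computes $dt\wedge ds$, because $ds$ involves exactly the combination $d\xi-d\eta$ and the antisymmetrization forces the Hessian cross-terms into the same pattern. Once the scalar is seen to be common, matching the prefactors $a_n(i/2)^n$ against $2^{-2n}\cdot(\text{sign})$ gives the stated identity $\pi^*(dt\,ds)=(-2)^{-n}a_n\Omega$.
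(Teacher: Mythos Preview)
Your approach is workable in principle but you have not carried out the key step, and you have missed the simplification that makes the lemma a two-line computation. Saying that both $\pi^*(dt\wedge ds)$ and $\Omega|_\Lambda$ are scalar multiples of $dx\wedge dy$ is tautological; the content is to identify the scalars, and your final paragraph asserts they agree without computing either one. If you push your method through you will find that both scalars equal a constant times $\det\bigl(\phi''(x)+\phi''(y)\bigr)$, but extracting this from the full $2n$-form expansion of $dz\wedge d\bar\zeta$ and of $(dx+dy)\wedge(d\xi-d\eta)$ requires exactly the determinant bookkeeping you flagged as the obstacle, and you have not done it.

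The paper avoids all of this by working at the level of $2$-forms rather than $2n$-forms. Set $\tau=\sum_j dt_j\wedge ds_j$, so that $dt\wedge ds=a_n\tau^n/n!$. Pulling back, $\pi^*\tau=\tfrac14\sum_j(dx_j+dy_j)\wedge(d\xi_j-d\eta_j)$. The crucial observation is that on $\Lambda$ one has $\sum_j dx_j\wedge d\xi_j=0$ and $\sum_j dy_j\wedge d\eta_j=0$, simply because $\sum_j\xi_j\,dx_j=d\phi(x)$ is exact (this is the Lagrangian property of the gradient graph). Hence on $\Lambda$, $\pi^*\tau=\tfrac14\sum_j(-dx_j\wedge d\eta_j+dy_j\wedge d\xi_j)$. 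Expanding $\omega=\tfrac{i}{2}\sum dz_j\wedge d\bar\zeta_j$ in real coordinates and using the same vanishing, one finds $\omega|_\Lambda=-2\,\pi^*\tau$. Taking $n$-th powers and dividing by $n!$ gives the lemma immediately, with no Hessians and no combinatorics. Your plan never invokes the identity $\sum dx_j\wedge d\xi_j=0$, which is precisely what collapses the problem.
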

\begin{proof}
Let
$$
\tau:=\sum dt_j\wedge ds_j.
$$
Then $dtds=a_n\tau^n/n!$. By the definition of $\pi$
$$
\pi^*(\tau)=\frac{1}{4}\sum (dx_j\wedge d\xi_j -dx_j\wedge d\eta_j+ dy_j\wedge d\xi_j -dy_j\wedge d\eta_j).
$$
On $\Lambda$, $\sum\xi_j dx_j=d\phi(x)$. Taking the exterior derivative we find that $\sum dx_j\wedge d\xi_j=0$ on $\Lambda$. In the same way, $\sum dy_j\wedge d\eta_j=0$ on $\Lambda$. Hence
$$
\pi^*(\tau)=\frac{1}{4}\sum (-dx_j\wedge d\eta_j+ dy_j\wedge d\xi_j ).
$$
On the other hand
$$
\omega=\frac{i}{2}\sum (dx_j\wedge d\xi_j+dy_j\wedge d\eta_j +i(-dx_j\wedge d\eta_j+ dy_j\wedge d\xi_j)).
$$
As we have just seen the real part of the sum vanishes on $\Lambda$. Hence
$$
\omega=\frac{-1}{2}\sum (-dx_j\wedge d\eta_j+ dy_j\wedge d\xi_j)= (-2)\pi^*(\tau).
$$
Taking the $n^{th}$ exterior power of both sides and dividing by $n!$ the lemma follows.
\end{proof}

As for the integrand in the Mahler integral we first note that by convexity, if $(t,s)=\pi(x,y,\xi,\eta)$, 
$$
\phi(t)=\phi((x+y)/2)\leq (\phi(x)+\phi(y))/2.
$$
Similarily
$$
\phi^*(s)=\phi^*((\xi-\eta)/2)\leq (\phi^*(\xi)+\phi^*(-\eta))/2=(\phi^*(\xi)+\phi^*(\eta))/2,
$$
where the assumption that $\phi$, and therefore $\phi^*$, is even is used in the last equality. Summing these two inequalities we get
$$
\phi(t)+\phi^*(s)\leq (1/2)\left(\phi(x)+\phi^*(\xi) +\phi(y)+\phi^*(\eta)\right).
$$
Invoking (2.1) we see that on $\Lambda$
$$
\phi(t)+\phi^*(s)\leq (1/2)(x\cdot\xi+y\cdot\eta),
$$
so
\be
 e^{-(1/2)(x\cdot\xi+y\cdot\eta)}\leq \pi^*(e^{-\phi -\phi^*}).
\ee

We now combine (2.2) and Lemma 2.3 and get the following lower bound for the Mahler integral
\be
2^{-n} \left |\int_\Lambda e^{-(1/2)(z\cdot\bar\zeta)} \Omega \right | \leq 
2^{-n}\int_\Lambda e^{-(1/2)(x\cdot\xi+y\cdot\eta)} |\Omega|\leq \int_{\R^n_t\times\R^n_s} e^{-(\phi(t)+\phi^*(s))} dtds.
\ee
The rest of the argument is basically that we  deform $\Lambda$ continuously to another 'contour'. This should not change the integral
\be
\int_\Lambda e^{-(1/2)(z\cdot\bar\zeta)} \Omega,
\ee
since, as remarked above,  the integrand is a closed form. The only slight complication here is that $\Lambda$ is unbounded and we need to estimate the tails; one possible way to handle this is given below (there are many others).

Recall that $\Lambda$ is given by the equations $\xi=\partial\phi(x)/\partial x$ and $\eta=\partial\phi(y)/\partial y$. Now, say that we can deform $\Lambda=\Lambda_\phi$ to the manifold $\Lambda_0$, defined by $\phi(x)=|x|^2/2$ without changing the integral (2.4). 
Then $\Lambda_0$ is defined by  $\xi=x, \eta=y$, so $\Lambda_0$ is the diagonal in $\C^n_z\times\C^n_\zeta$. 
Hence we see that  the left hand side of (2.3) equals
$$
2^{-n}\int_{\C^n} e^{-|z|^2/2} dm=2^{-n}(\int_\C e^{-|z_1|^2/2})^n= \pi^{n}.
$$
Accepting the deformation argument above this completes the proof of the theorem. 

We now turn to the rigorous verification. We first give an elementary lemma that must be well known.
\begin{lma} Let $\phi_j$ be a sequence of convex functions decreasing to $\phi$ as $j\to \infty$. Then the sequence $\phi_j^*$ increases to $\phi^*$. Similarily, if $\phi_j$ increases to $\phi$, then $\phi_j^*$ decreases to $\phi^*$.
\end{lma}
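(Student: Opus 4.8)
The plan is to reduce everything to the elementary fact that $\psi\mapsto\psi^*$ reverses the pointwise order, $\psi_1\le\psi_2\ \Rightarrow\ \psi_2^*\le\psi_1^*$, which is immediate from $\psi^*(\xi)=\sup_x(\xi\cdot x-\psi(x))$. For the first statement, assume $\phi_j\downarrow\phi$. Order reversal gives $\phi_1^*\le\phi_2^*\le\cdots$ and $\phi_j^*\le\phi^*$ for every $j$, so $\phi_j^*$ increases to some limit $\psi\le\phi^*$; the remaining inequality $\psi\ge\phi^*$ is what I would prove, and it is easy: for fixed $\xi$ and arbitrary $x$ one has $\phi_j^*(\xi)\ge\xi\cdot x-\phi_j(x)$, and since $\phi_j(x)\downarrow\phi(x)$ is a trivial scalar limit, letting $j\to\infty$ gives $\psi(\xi)\ge\xi\cdot x-\phi(x)$, whence $\psi(\xi)\ge\phi^*(\xi)$ after taking the supremum over $x$. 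This is the only point at which the hypothesis $\phi_j\to\phi$ is used.

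For the second statement, $\phi_j\uparrow\phi$, order reversal alone only yields the easy half: $\phi_j^*$ decreases and stays $\ge\phi^*$, so $\phi_j^*$ decreases to some $\psi\ge\phi^*$, and one now has to bound a supremum from above rather than below. My plan is to bootstrap from the case just proved by applying it to the decreasing sequence $\phi_j^*$: this gives that $(\phi_j^*)^*=\phi_j^{**}$ increases to $\psi^*$. Since each $\phi_j$ is convex and lower semicontinuous (a finite convex function on $\R^n$ is even continuous), the Fenchel--Moreau theorem gives $\phi_j^{**}=\phi_j$, so $\phi_j\uparrow\psi^*$; comparing with $\phi_j\uparrow\phi$ forces $\psi^*=\phi$, hence $\psi^{**}=\phi^*$, and therefore $\psi=\phi^*$ provided $\psi$ coincides with its own biconjugate.

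That proviso is the one genuine obstacle: a decreasing limit of lower semicontinuous functions need not be lower semicontinuous, so a priori one only obtains $\psi^{**}=\phi^*$. But $\psi$ is convex with $\psi\ge\psi^{**}=\phi^*$, and a convex function coincides with its lower semicontinuous regularization away from the boundary of its effective domain, so $\psi=\phi^*$ outside a Lebesgue-null set -- which is precisely what is needed when the conclusion is fed into the integral $\int e^{-\phi^*}$ in Theorem~2.1 (and for that reduction it is in any case enough to use only the first, decreasing, half of the lemma, a general $\phi$ being approximated from above by smooth strictly convex functions). If one insists on pointwise equality everywhere, one can either invoke the standard fact that on the interior of $\mathrm{dom}\,\phi^*$ pointwise convergence of convex functions holds automatically, or argue directly: if $\phi_j^*(\xi)\to\psi(\xi)>\phi^*(\xi)$, choose near-maximizers $x_j$ of $\xi\cdot x-\phi_j(x)$ with $\xi\cdot x_j-\phi_j(x_j)\ge c$ for some fixed $c>\phi^*(\xi)$, keep the $x_j$ in a compact set using coercivity of $\xi\cdot x-\phi_1(x)$, extract a convergent subsequence $x_j\to x_*$, and pass to the limit using monotonicity in $j$ and continuity of each $\phi_k$ to obtain $\xi\cdot x_*-\phi(x_*)\ge c$, contradicting the definition of $\phi^*(\xi)$.
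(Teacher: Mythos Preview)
Your argument tracks the paper's exactly. The decreasing case is identical in substance (fix $x$, pass to the limit in $\phi_j^*(\xi)\ge\xi\cdot x-\phi_j(x)$, then take the supremum over $x$; the paper inserts an auxiliary $\delta$ but nothing more). For the increasing case the paper performs precisely your bootstrap: it writes $\lim\phi_j^*=\phi_\infty^*$ for some convex $\phi_\infty$, applies the first half to obtain $\phi_j=\phi_j^{**}\uparrow\phi_\infty$, and concludes $\phi_\infty=\phi$. The assertion ``the limit can be written as a conjugate'' is exactly your $\psi=\psi^{**}$, which the paper states without justification. You are right to isolate this, and indeed pointwise equality can fail on the relative boundary of $\mathrm{dom}\,\phi^*$: for $\phi_j(x)=(1-1/j)\,|x|$ on $\R$ one has $\phi_j^*(1)=+\infty$ for every $j$ while $\phi^*(1)=0$. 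Your remedy---equality off a Lebesgue-null set, which is all the monotone-convergence step in Theorem~2.1 requires---is the honest resolution, and in this respect your write-up is more careful than the paper's.

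One genuine slip in your closing ``direct'' argument: coercivity of $\xi\cdot x-\phi_1(x)$ is not available in general (take $\phi_1\equiv 0$, or the example above), so the near-maximizers $x_j$ need not remain in a compact set. Replacing $\phi_1$ by some $\phi_{j_0}$ with $\xi\in\mathrm{int}(\mathrm{dom}\,\phi_{j_0}^*)$ does give the required coercivity, and one can show such a $j_0$ exists whenever $\xi\in\mathrm{int}(\mathrm{dom}\,\phi^*)$; but this recovers only the interior statement you already invoked, so the direct route does not extend the conclusion to boundary points.
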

\begin{proof}
In the proof we will allow also convex functions that attain the value $+\infty$, cf. \cite{Rockafellar}.
Assume first that $\phi_j$ decreases. Take $\delta>0$. For $x$   fixed such that $\phi(x)<\infty$, take $j$ so large that $\phi_j(x)<\phi(x)+\delta$. Then, for any $\xi$, 
$$
x\cdot\xi-\phi(x)\leq x\cdot\xi-\phi_j(x)+\delta\leq \phi_j^*(\xi)+\delta\leq \lim\phi_j^*(\xi)+\delta.
$$
Clearly this inequality holds also if $\phi(x)=\infty$. 
Taking the supremum over all $x$  we see that  $\phi^*(\xi)\leq\lim\phi_j^*(\xi)$. This  proves the first claim since the opposite inequality is evident. 

Now assume that $\phi_j$ increases to $\phi$. The $\phi_j^*$ decreases to a limit that can be written $\phi_\infty^*$ for some convex function $\phi_\infty$. Taking Legendre transforms, we have by the first part that $\phi_j$ increases to $\phi_\infty$. Hence, 
$\phi_\infty=\phi$, which completes the proof.
\end{proof}
From the lemma we see, by monotone convergence,  that if Theorem 2.1 holds for a monotone sequence of convex functions $\phi_j$, then it holds for the limit function as well. To use this, we first note that the deformation argument above goes through if $\phi$ is smooth and $\phi(x)=|x|^2/2 +C$ for $|x|$ sufficiently large, since in that case we have good decay of the integrand at infinity.  This is much like deforming the contour using Cauchy's integral theorem in one complex variable and we next give the details of this argument. 

Recall that for a smooth convex function $\phi$ on $\R^n$ 
$$
\Lambda=\Lambda_\phi= \{(x,y,\xi, \eta); \xi=\partial\phi(x)/\partial x, \eta=\partial\phi(y)/\partial y\} \subset \R^n_x\times\R^n_y\times\R^n_\xi\times\R^n_\eta,
$$
and denote the integral
$$
\int_{\Lambda _\phi}e^{-(1/2)(z\cdot\bar\zeta)} \Omega =: I_\phi.
$$
Our claim is that if $\phi_0=|x|^2/2$ and $\phi_1=|x|^2/2 +C$ for $|x|>R$, then $I_{\phi_0}=I_{\phi_1}$. We may assume that $\phi_1(0)=0$. 

Let, for $0\leq t\leq 1$, $\phi_t=t\phi_1+(1-t)\phi_0$. Define, for $z=x+iy\in\C^n$ and $0\leq t\leq 1$, $\phi_t=t\phi_1+(1-t)\phi_0$, 
$$
H_t(z)=(x,y, \partial\phi_t(x)/\partial x, \partial\phi_t(y)/\partial y).
$$
$H_t$ gives a homotopy between the 'contours' $\Lambda_{\phi_0}$ and $\Lambda_{\phi_1}$. Put
$$
\alpha=e^{-(1/2)(z\cdot\bar\zeta)} \Omega .
$$
Then
$$
I_{\phi_0}-I_{\phi_1}= \int_{\C^n} H_0^*(\alpha)-\int_{\C^n}H_1^*(\alpha).
$$
Recall that on $\Lambda_{\phi_t}$,
$$
|e^{-1/2(z\cdot\bar\zeta)}|=e^{-(1/2)(\phi_t(x)+\phi_t(y)+\phi_t^*(\xi)+\phi_t^*(\eta))}\leq e^{-(1/2)(\phi_t(x)+\phi_t(y))},
$$
since $\phi_t^*(\xi)\geq -\phi_t(0)=0$.  If $|z|>2R$ and , say, $|y|\geq|x|$, then $|y|>R$ so $\phi_t(y)=|y|^2+tC\geq |z|^2/2+tC$. Hence
$$
|e^{-1/2(z\cdot\bar\zeta)}|\leq C_1e^{-|z|^2/4}
$$
on $\Lambda_{\phi_t}$. Moreover, the second derivatives of $\phi_t$ are bounded, and the first derivatives are of at most linear growth. Hence the form $H_t^*(\alpha)$ has coefficients decaying superexponentially when $z$ tends to infinity, both when we take $t$ fixed and when we regard it as a form on $\C^n\times [0,1]$. 

Let $\chi_r(z)=\chi(|z|/r)$, where $\chi$ is compactly supported and equal to 1 for $|z|<1$. Then, for large $r$,
$$
\int_{\C^n} H_0^*(\alpha)-\int_{\C^n}H_1^*(\alpha)=\int_{\C^n} \chi_r H_0^*(\alpha)-\int_{\C^n}\chi_rH_1^*(\alpha)
+\epsilon(r),
$$
where $\epsilon(r)$ tends to zero as $r$ tends to infinity. By Stokes' theorem this equals, since $\alpha$ is closed, 
$$
\int_{\C^n\times [0,1]} d\chi_r\wedge H^*(\alpha) +\epsilon(r).
$$
When $r$ tends to infinity, the integral tends to zero, so $I_{\phi_0}-I_{\phi_1}=0$, which was the claim.

Hence Theorem 2.1 holds for  smooth convex functions $\phi$ that equal $|x|^2+C$ for $|x|$ sufficiently large. Next, this implies that it holds for all  such convex functions, even if they are not smooth. To see this we  just need to regularize $\phi$ by convolving with an approximate identity $\epsilon^{-n}\alpha(|x|/\epsilon)$.  This produces a sequence of smooth functions that decreases to $\phi$ as $\epsilon$ tends to zero (since $\phi$ is in particular subharmonic) and it does not destroy the property of being equal to $|x|^2+C$  for $|x|$ large. Hence Theorem 2.1 holds in the limit as well by the lemma.

 From there it follows that the theorem holds if $\phi$ has at most linear growth, since $\phi$ is then the decreasing limit of 
$$
\phi_j(x):=\max(\phi(x), |x|^2/2-j),
$$
and these functions equal $|x|^2/2-j$ when $|x|$ is large. Finally, an arbitrary $\phi$ is the increasing limit of functions of linear growth, e.g.
$$
\psi_j(x)=\sup_{|\xi|<j} x\cdot\xi-\phi^*(\xi).
$$
This proves Theorem 2.1 in general.

{\bf Remark 1:}
Note that on $\Lambda$, since $\xi=\partial\phi(x)/\partial x$ and $\eta=\partial\phi(y)/\partial y$, the proof of Lemma 2.3 shows that
$$
\omega=\frac{1}{2}\sum (\phi_{j k}(x)  +\phi_{j k}(y)) dx_j\wedge dy_k.
$$
Thus,  if we parametrize $\Lambda$ by $(x,y)$, $\Omega$ becomes a mixed Monge-Ampere form of $\phi(x)$ and $\phi(y)$, whereas (surpringly!) if we parametrize by $(t,s)$ it is, up to a constant, just the Lebesgue volume form $dtds$. 

The proof of the lemma actually shows something stronger than this, namely that $\pi^*(\tau)=(-1/2)\omega$ on $\Lambda$. Hence $\Lambda$ is Lagrangian for the imaginary part of $\omega$, symplectic for the real part, and $(t,s)$ are Darboux coordinates for $\Re(\omega)$. (Meaning that $(t,s)$ reduces the symplectic form $\Re(\omega)$ to the standard symplectic form on $\R^{2n}$.) 
\qed

{\bf Remark 2:} If we make the change of variables
$$
z'= \frac{(1-i)}{2}z, \quad \zeta'=\frac{(1+i)}{2}\zeta,
$$
the form $\omega$ transforms to $-(i/2)\omega$, so  essentially  we switch the real and imaginary parts. Moreover, in these coordinates, the map $\pi$ simply becomes 
$$
t=x'=\Re z', \quad s=\xi'=\Re \zeta'.
$$
\qed

{\bf Remark 3:} It is easy to see that Kuperberg's theorem follows from Theorem 2.1, by applying the theorem to a 1-homogenous function $\phi$; see e. g. \cite{Berndtsson} for details of this argument. The converse direction seems to be a bit more involved. Using partly ideas from \cite{AKM}, it was proved by Fradelezi and Meyer, \cite{Fradelezi} that if Mahler's conjecture does hold, i. e. if
$$
M(K)\geq 4^n/n!
$$
for any symmetric convex body, then Theorem 2.1 follows, with $\pi^n$ replaced by $4^n$. It is however crucial for their argument that one has precisely the constant $4^n$ here. On the other hand, using the Bourgain-Milman inequality,  it was proved by Klartag and Milman, \cite{Klartag-Milman}, that, unconditionally,  Theorem 2.1 holds with $\pi^n$ replaced by $c^n$ for  {\it some}
universal constant $c$.
\section{Comparison with Kuperberg's proof}

Kuperberg considers a convex body $K$, symmetric around the origin,which can be assumed to be strictly convex and smoothly bounded. He then defines
$$
K^+:=\{(x,\xi)\in \partial K\times \partial K^\circ; x\cdot\xi=1\}.
$$
 When $K$ is strictly convex and smoothly bounded, there is for each $x$ in the boundary of $K$ a unique $\xi=\xi(x)$ in the boundary of $K^\circ$ such that $x\cdot \xi(x)=1$, so $K^+$ is the graph of this map. (Hence, in particular, $K^+$ is a smooth manifold.) Concretely, if $\mu$ is the Minkowski functional of $K$ ; $\xi(x)=\partial\mu/\partial x$ (as follows e.g. from Euler's formula applied to the 1-homogenous function $\mu$).

In general, if $\lambda$ is an $n$-dimensional submanifold of $\R^n\times\R^n$, one defines its 'directed volume' in the following way. Parametrize $\lambda$ by a map $F:U\to \R^n\times \R^n$, where $U$ is an open subset of $\R^n$. Write
$$
\vec{F}=\sum_1^n F_j e_j +\sum_1^{n} F_{j+n}f_j,
$$
where $e_j$ is a basis for the first copy of $\R^n$, and $f_j$ is a basis for the second copy. Then the directed volume of $\lambda$ is
$$
\vec{V}(\lambda)=\int_U (d\vec{F})^n/n!.
$$
This is  an $n$-vector in $\bigwedge^n(\R^n\oplus\R^n)$, and the usual change of variables formula shows that it does not depend on the choice of $F$. Since the integrand is exact, the directed volume depends only on the boundary of $\lambda$. The directed volume of the boundary is finally defined as the directed volume of $\lambda$. The main object of Kuperberg's argument is the directed volume of $K^+$. To compute it, we may choose $\lambda$ to be any manifold of the form
$$
\lambda=\{(x,\xi); x\in K; \,\xi=\partial\phi(x)/\partial x\},
$$
where $\phi$ is any convex function equal to $\mu$ near the boundary of $K$. 

Let now 
$$
K^-:=\{ (y,-\eta); (y,\eta)\in K^+\}
$$
and form
$$
V:=\vec{V}(K^+)\wedge\vec{V}(K^-).
$$

This is an element in the top exterior product of $\R^n\oplus\R^n$ so we may consider it as a scalar (after dividing by $e\wedge f$).

Kuperberg's proof now consist of two parts: First, he proves in \cite{Kuperberg} that $V$ equals the volume of a subset of the convex hull of the product of $K^+$ and $K^-$. Since this set is included in $K\times K^\circ$, it follows that $V$ is bounded from above by the Mahler volume of $K$.  The second part is to prove that $V$ is greater than $\pi^n/n!$. This is (roughly) Kuperberg's 'bottle-neck conjecture', proved in the later paper \cite{Kuperberg2}.

The link between this proof and the one we have given in the previous section is that, with $\Lambda= \lambda\times \lambda$ (which is a manifold of  the type we considered in the proof of Theorem 2.1), 
$$
V=\int_{\Lambda} \theta
$$
where $\theta$ is a certain differential form of degree $2n$. The main observation is that on $\Lambda$, $\theta$ equals the differential form that appeared in the previous section and could be viewed either as $\pi^*(dtds)$ or  $2^{-n}\Omega$ restricted to $\Lambda_K$. (This gives a third interesting interpretation of the restriction of $\Omega$ to $\Lambda$; cf. the remark after Lemma 2.3.) This  claim follows from a direct computation   that we omit.  Therefore
\be
V=2^{-n}\int_\Lambda \Omega.
\ee

This observation was a main motivation for our proof, since it shows that it must be possible to bound the integral of $\Omega$ from above by the Mahler volume, by the first part of Kuperberg's proof. Finally, Kuperberg's convex hull of $K^+\times K^-$ was part of the inspiration to consider the map $(x,y,\xi,\eta)\to((x+y)/2,(\xi-\eta)/2)$ in our proof.


\begin{thebibliography}{9999}
\bibitem{AKM} Artstein, S., Klartag, B. and Milman, V.; The Santal\'o point  of a function, and a functional form of the Santal\'o inequality. Mathematika, 51(1-2) (2004), pp. 33-48.

\bibitem{B} Berndtsson, B.; An inequality for Fourier-Laplace transforms and the existence of exponential frames in Fock space. J. Func. Anal. 149 (1997), pp. 83-101.

\bibitem{Berndtsson} Berndtsson, B.; Bergman kernels for Paley-Wiener spaces and Nazarov's proof of the Bourgain-Milman theorem. Preprint.

\bibitem{Bourgain-Milman} Bourgain, J. and Milman, V.; New volume ratio properties for convex symmetric bodies in $\R^n$, Invent. Math. 88 (1987), pp. 319-340.

\bibitem{Fradelezi} Fradelezi, M. and Meyer, M.; Some functional inverse Santal\'o inequalities. Advances in Mathematics 218 (2008) pp. 1430-1452.

\bibitem{Giannopoulos} Giannopoulos, A., Paouris, G. and Vritsiou, B-H.; The isotropic position and the reverse Santal\'o inequality.  Isr. J. Math. 203, 1-22 (2014)

\bibitem{Klartag-Milman} Klartag, B. and Milman, V.; Geometry of log-concave functions and measures.  Geom Dedicata 112, 169-182,  (2005). 

\bibitem{Kuperberg} Kuperberg, G.; The bottle-neck conjecture. Geometry and Topology 3 (1999), pp. 119-135.

\bibitem{Kuperberg2} Kuperberg, G.;  From the Mahler conjecture to Gauss linking integrals. GAFA, geom. func. anal., 18 (2008), pp. 870-892.

\bibitem{Nazarov} Nazarov, F.;  The H\"ormander proof of the Bourgain-Milman theorem. In: Klartag, B., Mendelson, S., Milman V. (eds), Geometric aspects of functional analysis, Lecture notes in Mathematics Vol. 2050, Springer, 2012.

\bibitem{Rockafellar} Rockafellar, R. T.; Convex Analysis. Princeton Mathematical Series, No. 28. Princeton University Press, Princeton, NJ (1970)
\end{thebibliography}
\end{document}